\newtheorem{lem}{Lemma}
\newtheorem{theorem}[lem]{Theorem}
\newtheorem{cor}[lem]{Corollary}
\newtheorem{prop}[lem]{Proposition}
 \newcommand{\Q}{\ensuremath{\mathbb{Q}}}
 \newcommand{\C}{\ensuremath{\mathbb{C}}}
\newcommand{\Gal}{{\rm Gal}}
\renewcommand{\Re}{{\rm Re \,}}
\begin{document}
\title{\bf Independence of  Artin L-functions}
\author{Mircea Cimpoea\c s\\Florin Nicolae}

\maketitle

\begin{abstract}
Let $K/\Q$ be a finite Galois extension. Let $\chi_1,\ldots,\chi_r$ be $r\geq 1$ distinct characters of the Galois 
group with the associated Artin L-functions $L(s,\chi_1),\ldots, L(s,\chi_r)$. Let $m\geq 0$.
We prove that the derivatives $L^{(k)}(s,\chi_j)$, $1\leq j\leq r$, $0\leq k\leq m$, are linearly independent over
the field of meromorphic functions of order $<1$. From this it follows that the L-functions corresponding to the
irreducible characters are algebraically independent over the field of meromorphic functions of order $<1$.

{\it Key words:} Artin L-function; linear independence; algebraic independence; arithmetic functions; Dirichlet series

MSC2010: 11R42; 11M41
\end{abstract}

\section*{Introduction}

\indent Let $K/\Q$ be a finite Galois extension. For the character $\chi$ of a 
representation of the Galois group $G:=\Gal(K/\Q)$ on a finite dimensional 
complex vector space, let $L(s, \chi, K/\Q)$ be the 
corresponding Artin $L$-function (\cite{artin}, P. 296). It was proved in \cite[Theorem 1]{lucrare}
that the derivatives of any order of 
Artin $L$-functions to finitely many distinct characters of $G$ are linearly 
independent over $\mathbb C$. A more general result concerning linear independence 
over $\mathbb C$ of functions in a class which contains the Artin $L$-functions was proved in \cite{kac}. In \cite{mol} it was proved 
the independence of any family of suitable $L$-functions (including Artin $L$-functions) with respect to a ring generated by a slow varying function.\\
\indent For $\varepsilon>0$ let
$$\mathcal F_{\varepsilon}=\{f:(1+\varepsilon, +\infty)\to \C\}, $$
$$\mathcal B_{\varepsilon}=\{f\in \mathcal F_{\varepsilon}\;:\; \forall a>0 \;
\lim_{\sigma\to +\infty}e^{-a\sigma}|f(\sigma)|=0 \},$$
$$\mathcal V_{\varepsilon}=\{f\in \mathcal F_{\varepsilon}\;:\exists a>0\;\lim_{\sigma \to +\infty}e^{a\sigma}|f(\sigma)|=0 \}.$$
The main result of this paper is \\
\textbf{Theorem 7.} \emph{Let $K/\mathbb Q$ be a finite Galois extension, and let $\chi_1,\ldots,\chi_r$ be distinct characters of the Galois 
group with the associated Artin L-functions $L(s,\chi_1),\ldots, L(s,\chi_r)$.
Let $\varepsilon>0$. Let $\mathcal A_{\varepsilon}\subset \mathcal B_{\varepsilon}$ be a $\mathbb C$-vector space 
with $$\mathcal A_{\varepsilon}\cap  \mathcal V_{\varepsilon} = \{0\}.$$ Let  $m\geq 0$. If the functions 
 $G_{jk}(\sigma)\in \mathcal A_{\varepsilon}$ satisfy 
$$ \sum_{j=1}^r \sum_{k=0}^m G_{jk}(\sigma)L^{(k)}(\sigma,\chi_j) = 0,\; \sigma>1+\varepsilon, $$
then $G_{jk}=0$, $1\leq j\leq r$, $0\leq k\leq m$. }\\
As a consequence we have
\\
\textbf{Corollary 8.}\emph{
The functions $L^{(k)}(s,\chi_j)$, $1\leq j\leq r$, $0\leq k\leq m$ are linearly independent over
the field of meromorphic functions of order $<1$.}\\
Since the Artin L-functions are meromorphic of order $1$ this result is best possible when we look for linear dependence with coefficients 
meromorphic functions. This extends the main result of \cite{lucrare}. Let $\chi_1,\ldots,\chi_h$ be the irreducible characters of the Galois 
group $Gal(K/\mathbb Q)$. In \cite[Corollary 4]{lucrare} it was proved that
the Artin L-functions $L(s,\chi_1), \ldots, L(s,\chi_h)$ are algebraically independent over $\mathbb C$.
This extended Artin's result \cite[Satz 5, P. 106]{artin1} that $L(s,\chi_1), \ldots$ , $L(s,\chi_h)$ are multiplicatively independent.
In Corollary $9$ we prove that $L(s,\chi_1), \ldots, L(s,\chi_h)$ are algebraically independent over 
the field of meromorphic functions of order $<1$.

\section{Arithmetic Functions and Dirichlet Series}

In this section we present some properties of Dirichlet series which are needed for the proof of the results on Artin L-functions in section 
$2$.
 
\begin{lem}
Let $(a_n(\sigma))_{n\geq 1}$ be a sequence of functions in $(\mathcal F_\varepsilon\setminus
\mathcal V_\varepsilon) \cup \{0\}$ such that 
there exists a function
$M:(1+\varepsilon, +\infty)\to [0,+\infty)$ with the properties:
\begin{enumerate}
 \item[(i)] $n^{-\varepsilon}|a_n(\sigma)|\leq M(\sigma), \; \forall n\geq 1,\sigma>1+\varepsilon$,
 \item[(ii)] For all $a>0$ it holds that $\lim_{\sigma\to +\infty}e^{-a\sigma}M(\sigma)=0$.
\end{enumerate}
Then
\begin{enumerate}
\item[(1)] The series of functions $F(\sigma):=\sum_{n=1}^{\infty}
\frac{a_n(\sigma)}{n^\sigma}$ is absolutely convergent on $(1+\varepsilon, +\infty)$.

\item[(2)] If $F(\sigma)$ is
identically zero then $a_n(\sigma)$ is identically zero for any $n\geq 1$.
\end{enumerate}
\end{lem}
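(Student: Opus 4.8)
The plan is to establish part (1) first, since it is the routine ingredient, and then to build the proof of part (2) on top of it. For part (1), fix $\sigma > 1 + \varepsilon$. Using hypothesis (i) we have $|a_n(\sigma)| \leq n^{\varepsilon} M(\sigma)$, so
\begin{equation*}
\sum_{n=1}^\infty \frac{|a_n(\sigma)|}{n^\sigma} \leq M(\sigma) \sum_{n=1}^\infty \frac{n^\varepsilon}{n^\sigma} = M(\sigma) \sum_{n=1}^\infty \frac{1}{n^{\sigma - \varepsilon}},
\end{equation*}
and since $\sigma - \varepsilon > 1$ the last series converges. Hence $F(\sigma)$ is absolutely convergent on $(1+\varepsilon, +\infty)$, and moreover $|F(\sigma)| \leq M(\sigma)\,\zeta(\sigma - \varepsilon)$, a bound I expect to reuse below.

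For part (2), suppose $F \equiv 0$ on $(1+\varepsilon,+\infty)$; I argue by induction that $a_n \equiv 0$ for every $n$. Assume $a_1 \equiv \cdots \equiv a_{N-1} \equiv 0$ (the base case $N = 1$ being vacuous). Then for $\sigma > 1+\varepsilon$,
\begin{equation*}
0 = F(\sigma) = \frac{a_N(\sigma)}{N^\sigma} + \sum_{n=N+1}^\infty \frac{a_n(\sigma)}{n^\sigma},
\end{equation*}
so, multiplying by $N^\sigma$,
\begin{equation*}
|a_N(\sigma)| \leq \sum_{n=N+1}^\infty \frac{N^\sigma}{n^\sigma}\,|a_n(\sigma)| \leq M(\sigma) \sum_{n=N+1}^\infty \left(\frac{N}{n}\right)^{\sigma} n^{\varepsilon}.
\end{equation*}
To control the tail I split off a factor: write $n^\varepsilon (N/n)^\sigma = n^\varepsilon (N/n)^\varepsilon (N/n)^{\sigma - \varepsilon} = N^\varepsilon (N/n)^{\sigma-\varepsilon}$, which for $n \geq N+1$ is at most $N^\varepsilon (N/(N+1))^{\sigma - \varepsilon - 1} \cdot (N/n)$ when $\sigma - \varepsilon - 1 > 0$; summing the geometric-type tail $\sum_{n > N}(N/n)$ is not convergent, so instead I keep one full power: $\sum_{n=N+1}^\infty (N/n)^{\sigma-\varepsilon} \leq (N/(N+1))^{\sigma - \varepsilon - 1}\sum_{n=N+1}^\infty (N/n)\cdot(1/n)^{0}$ — this still diverges, so the clean route is to peel off just enough: $\sum_{n>N}(N/n)^{\sigma-\varepsilon} = N^{\sigma-\varepsilon}\sum_{n>N} n^{-(\sigma-\varepsilon)} \leq N^{\sigma-\varepsilon}\big((N/(N+1))^{\sigma-\varepsilon}/(\sigma-\varepsilon-1)\big)\cdot\text{(integral bound)}$. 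In any case one gets a bound of the shape
\begin{equation*}
|a_N(\sigma)| \leq C_N\, M(\sigma)\left(\frac{N}{N+1}\right)^{\sigma}
\end{equation*}
for a constant $C_N$ depending only on $N$ (comparing the tail sum to $\int_N^\infty (N/x)^{\sigma-\varepsilon}\,dx = N/(\sigma-\varepsilon-1)$ for $\sigma$ large). Now set $a := \log\big((N+1)/N\big) > 0$. Then $(N/(N+1))^\sigma = e^{-a\sigma}$, and hypothesis (ii) gives $\lim_{\sigma\to+\infty} e^{-(a/2)\sigma} M(\sigma) = 0$, so
\begin{equation*}
e^{(a/2)\sigma}|a_N(\sigma)| \leq C_N\, e^{(a/2)\sigma} M(\sigma)\, e^{-a\sigma} = C_N\, e^{-(a/2)\sigma} M(\sigma) \longrightarrow 0
\end{equation*}
as $\sigma \to +\infty$. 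Hence $a_N \in \mathcal V_\varepsilon$. But by hypothesis $a_N \in (\mathcal F_\varepsilon \setminus \mathcal V_\varepsilon) \cup \{0\}$, so $a_N \equiv 0$, completing the induction step.

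The main obstacle is purely bookkeeping in the tail estimate: one must extract a genuine exponential decay factor $(N/(N+1))^\sigma$ from the tail $\sum_{n>N}(N/n)^\sigma n^\varepsilon$ while absorbing the remaining $n^\varepsilon$ against a convergent series. The trick is to borrow $\varepsilon$ worth of decay from one comparison (to make $\sum n^{-(\sigma-\varepsilon)}$ converge) and keep the rest as the uniform exponential factor $e^{-a\sigma}$ with $a = \log((N+1)/N)$; because $M(\sigma)$ kills every exponential by (ii), any positive $a$ suffices, so there is ample room and no delicate optimization is needed.
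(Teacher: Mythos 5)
Your proof is correct and follows essentially the same route as the paper: induction on $n$, multiplication by $N^{\sigma}$, extraction of an exponential decay factor $(N/(N+1))^{c\sigma}$ from the tail, and the conclusion that $a_N\in\mathcal V_{\varepsilon}$, which by hypothesis forces $a_N=0$. The only blemish is the meandering tail estimate: the comparison with $\int_N^{\infty}(N/x)^{\sigma-\varepsilon}\,dx=N/(\sigma-\varepsilon-1)$ by itself yields only an $O(1/\sigma)$ bound without the exponential factor, so to justify $\sum_{n>N}(N/n)^{\sigma-\varepsilon}\leq C_N(N/(N+1))^{\sigma}$ you should (as the paper does) split off a fixed convergent power $\sum_{n>N}(N/n)^{1+\eta}$ and bound the remaining power of $N/n$ by its largest term $(N/(N+1))^{\sigma-\varepsilon-1-\eta}$ --- which is precisely the trick you describe correctly in your closing paragraph.
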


\begin{proof}
(1) Let $\sigma>1+\varepsilon$. By $(i)$, it holds that
$$ \left|\frac{a_n(\sigma)}{\sigma^n}\right|  \leq \frac{M(\sigma)}{n^{\sigma-\varepsilon}}.$$
Since the series $\sum_{n=1}^{\infty}\frac{1}{n^{\sigma-\varepsilon}}$ is convergent, it follows that
the series $\sum_{n=1}^{\infty}
\frac{a_n(\sigma)}{n^\sigma}$ is absolutely convergent.

(2) We show inductively that $a_n$ is identically zero for any $n\geq 1$. Suppose that $k=1$ or that $k>1$ and, for induction, $a_1,\ldots,a_{k-1} $ 
are identically zero. For $\sigma>1+\varepsilon$ we have that
$$ |a_k(\sigma)| = \left|k^\sigma \sum_{n=k+1}^{\infty} \frac{a_n(\sigma)}{n^\sigma} \right|
\leq \sum_{n=k+1}^{\infty} \frac{|a_n(\sigma)|k^{\sigma}}{n^{\sigma}}\leq $$
\begin{equation}
\leq M(\sigma) \sum_{n=k+1}^{\infty} \frac{k^{\sigma}}{n^{\sigma-\varepsilon}} = 
k^{\varepsilon}M(\sigma)
\sum_{n=k+1}^{\infty}\frac{1}{(\frac{n}{k})^{\sigma-\varepsilon}}.
\end{equation}
We choose $\delta > \varepsilon$. Let  $\sigma \geq 1+\delta$. From $(1)$ it follows that
$$
|a_k(\sigma)| \leq k^{\varepsilon}M(\sigma)
\sum_{n=k+1}^{\infty}\frac{1}{(\frac{n}{k})^{\frac{\sigma+1-\varepsilon}{2}}}
\frac{1}{(\frac{n}{k})^{\frac{\sigma-1-\varepsilon}{2}}} \leq 
$$
\begin{equation}
\leq k^{\varepsilon}M(\sigma)
\left(\sum_{n=k+1}^{\infty}\frac{1}{(\frac{n}{k})^{1+\frac{\delta-\varepsilon}{2}}}\right)
\frac{1}{(\frac{k+1}{k})^{\frac{\sigma-1-\varepsilon}{2}}}.
\end{equation}
Let $0<a<\frac{1}{2}(\log(k+1)-\log k)$. From $(2)$ it follows that 
\begin{equation}
 e^{a\sigma}|a_k(\sigma)| \leq k^{\frac{\varepsilon-1}{2}} (k+1)^{\frac{1+\varepsilon}{2}} \left(\sum_{n=k+1}^{\infty}
\frac{1}{\left( \frac{n}{k} \right)^{1+\frac{\delta-\varepsilon}{2}}}\right) e^{(a-\frac{\log (k+1)-\log k}{2})\sigma}M(\sigma).  
\end{equation}
From $(3)$ and hypothesis $(ii)$ it follows that
$$\lim_{\sigma \rightarrow +\infty}e^{a\sigma}|a_k(\sigma)|=0,$$
so $a_k \in\mathcal V_{\varepsilon}$. Since, by hypothesis, $a_k\in (\mathcal F_\varepsilon\setminus
\mathcal V_\varepsilon) \cup \{0\}$ it follows that $a_k=0$.  
\end{proof}

Let $\varepsilon>0$. If $f(n)$ is an arithmetic function of order
$O(n^{\varepsilon})$, i.e. there exists $C>0$ such that $|f(n)|\leq Cn^{\varepsilon}$ for every $n\geq 1$ , then
the associated Dirichlet series
$$F(s):=\sum_{n=1}^{\infty}\frac{f(n)}{n^s}$$
defines a holomorphic function in the half plane $\Re(s) > 1 + \varepsilon$. 
The \emph{$k$-th arithmetic derivative} of $f(n)$ is
$$f^{(k)}(n) = (-1)^k f(n) \log^k n.$$
It holds that the \emph{$k$-th derivative} of $F(s)$ is the Dirichlet series
$$F^{(k)}(s)= \sum_{n=1}^{\infty} \frac{f^{(k)}(n)}{n^s}.$$

\begin{theorem}
Let $\varepsilon>0$.
Let $f_1(n),\ldots,f_r(n)$ be arithmetic functions of order
$O(n^{\varepsilon})$ linearly independent over $\mathbb C$ with the associated Dirichlet series $F_1(s),\ldots,F_r(s)$. 
Let $\mathcal A_{\varepsilon}\subset \mathcal B_{\varepsilon}$ be a $\mathbb C$-vector space 
with $$\mathcal A_{\varepsilon}\cap  \mathcal V_{\varepsilon} = \{0\}.$$  
Let $G_1(\sigma),\ldots,G_r(\sigma)\in \mathcal A_{\varepsilon}$
such that 
\begin{equation}
\sum_{j=1}^r G_{j}(\sigma) F_j(\sigma)=0,\; \sigma>1+\varepsilon.
\end{equation}
Then $G_1=\cdots=G_r=0$.  
\end{theorem}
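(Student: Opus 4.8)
The plan is to reduce Theorem~2 to Lemma~1 by expanding everything into a single Dirichlet series whose coefficients are functions of $\sigma$, and then checking that those coefficient-functions satisfy the hypotheses of Lemma~1, namely that they lie in $(\mathcal F_\varepsilon\setminus\mathcal V_\varepsilon)\cup\{0\}$ and are dominated by a suitable majorant $M(\sigma)$ with the growth property (ii). Write each $F_j(\sigma)=\sum_{n\geq 1} f_j(n) n^{-\sigma}$, so that relation (4) becomes
$$\sum_{n=1}^\infty \Bigl(\sum_{j=1}^r G_j(\sigma) f_j(n)\Bigr) n^{-\sigma} = 0,\quad \sigma>1+\varepsilon.$$
Set $a_n(\sigma):=\sum_{j=1}^r G_j(\sigma) f_j(n)$. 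First I would verify the majorant condition: since $|f_j(n)|\leq C n^\varepsilon$ for all $j$, we get $n^{-\varepsilon}|a_n(\sigma)|\leq C\sum_{j=1}^r |G_j(\sigma)| =: M(\sigma)$, which is independent of $n$; and since each $G_j\in\mathcal B_\varepsilon$, the function $M$ satisfies $\lim_{\sigma\to+\infty} e^{-a\sigma}M(\sigma)=0$ for every $a>0$, which is exactly hypothesis (ii) of Lemma~1. Condition (i) holds by construction.

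The delicate point — and the place where the hypothesis $\mathcal A_\varepsilon\cap\mathcal V_\varepsilon=\{0\}$ together with the $\mathbb C$-linear independence of the $f_j$ gets used — is checking that each $a_n$ lies in $(\mathcal F_\varepsilon\setminus\mathcal V_\varepsilon)\cup\{0\}$, i.e. that $a_n$ is either identically zero or not in $\mathcal V_\varepsilon$. Here I use that $\mathcal A_\varepsilon$ is a $\mathbb C$-vector space: for fixed $n$, $a_n=\sum_j f_j(n) G_j$ is a $\mathbb C$-linear combination of the $G_j$, hence $a_n\in\mathcal A_\varepsilon$; and $\mathcal A_\varepsilon\cap\mathcal V_\varepsilon=\{0\}$ forces $a_n$ to be either $0$ or outside $\mathcal V_\varepsilon$. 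So the hypotheses of Lemma~1 are met, and part (2) of Lemma~1 yields $a_n\equiv 0$ for every $n\geq 1$; that is, $\sum_{j=1}^r f_j(n) G_j(\sigma)=0$ for all $n\geq 1$ and all $\sigma>1+\varepsilon$.

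It remains to conclude $G_1=\cdots=G_r=0$ from the relations $\sum_{j=1}^r f_j(n) G_j(\sigma)=0$, $n\geq 1$. Fix $\sigma>1+\varepsilon$ and regard $(G_1(\sigma),\ldots,G_r(\sigma))\in\mathbb C^r$: the vectors $(f_1(n),\ldots,f_r(n))_{n\geq 1}$ span $\mathbb C^r$, because if they lay in a proper subspace there would be a nonzero $(c_1,\ldots,c_r)$ with $\sum_j c_j f_j(n)=0$ for all $n$, contradicting the $\mathbb C$-linear independence of $f_1,\ldots,f_r$ as arithmetic functions. Hence the only vector orthogonal to all of them is zero, so $G_j(\sigma)=0$ for each $j$; as $\sigma$ was arbitrary, $G_j\equiv 0$. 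Concretely one picks indices $n_1,\ldots,n_r$ for which the matrix $(f_j(n_i))_{i,j}$ is invertible and inverts the resulting linear system. I expect the main obstacle to be purely bookkeeping — making sure the interchange of summation in the Dirichlet expansion is legitimate (guaranteed by the absolute convergence in Lemma~1(1), which applies since the $a_n$ satisfy (i) and (ii)) and that the membership $a_n\in\mathcal A_\varepsilon$ is invoked correctly; there is no serious analytic difficulty beyond what Lemma~1 already packages.
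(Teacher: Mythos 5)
Your proposal is correct and follows essentially the same route as the paper: expand into a single Dirichlet series with coefficient functions $a_n(\sigma)=\sum_j G_j(\sigma)f_j(n)$, check the majorant and growth hypotheses of Lemma~1 using $|f_j(n)|\leq Cn^{\varepsilon}$ and $G_j\in\mathcal B_{\varepsilon}$, invoke the vector-space structure of $\mathcal A_{\varepsilon}$ and $\mathcal A_{\varepsilon}\cap\mathcal V_{\varepsilon}=\{0\}$ to place each $a_n$ in $(\mathcal F_{\varepsilon}\setminus\mathcal V_{\varepsilon})\cup\{0\}$, and finish with the linear independence of the $f_j$. Your explicit justification of why each $a_n$ lies in the admissible class is in fact slightly more detailed than the paper's one-line remark, but the argument is the same.
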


\begin{proof}
Let $\sigma>1+\varepsilon$. From $(4)$ it follows that
\begin{equation}
\sum_{n=1}^{\infty} \left(\sum_{j=1}^r G_{j}(\sigma)f_j(n) \right)n^{-\sigma} = 0.
\end{equation}
For $n\geq 1$ let
\begin{equation}
a_n(\sigma):=\sum_{j=1}^r G_{j}(\sigma)f_j(n).
\end{equation}
The functions $a_n(\sigma)$ are contained in the space $\mathcal A_{\varepsilon}$.
Since the functions $f_j(n)$ have order $O(n^{\varepsilon})$, there exists a
constant $C>0$ such that 
\begin{equation}
\max\{|f_j(n)|\;:\; 1\leq j\leq r\} \leq C n^{\varepsilon},\;
\forall n\geq 1.
\end{equation}
Let
\begin{equation}
M(\sigma) := C \sum_{j=1}^r |G_{j}(\sigma)|.
\end{equation}
From $(6),(7)$ and $(8)$ it follows that
\begin{equation}
|a_n(\sigma)| \leq M(\sigma)n^{\varepsilon},\; \forall n\geq 1.
\end{equation}
Also, since $G_{j} \in \mathcal B_{\varepsilon}$, from $(6)$ and $(9)$ it follows that
$$ \lim_{\sigma \rightarrow +\infty}e^{-a\sigma}M(\sigma) =
0,\; \forall a>0,$$
hence the conditions of the Lemma $1$ are satisfied.
From $(5)$ and  Lemma $1$ it follows that
\begin{equation}
a_n(\sigma)=0,\; \forall n\geq 1. 
\end{equation}
Since the functions $f_1(n),\ldots,f_r(n)$ are linearly
independent over $\mathbb C$, from $(6)$ and $(10)$ it follows that $G_{j}(\sigma)=0$, $1\leq j\leq r$, as required.
\end{proof}

An arithmetic function $f(n)$ is called \emph{multiplicative}, if
$f(1)=1$ and
$$ f(nm)=f(n)f(m),\; n,m\in\mathbb N\;\text{with }\gcd(n,m)=1.$$
Two multiplicative arithmetical functions $f(n)$ and $g(n)$ are called
\emph{equivalent} (see \cite{kac1}) if 
$f(p^j)=g(p^j)$ for all integers $j\geq 1$ and all but finitely many
primes $p$. Let $e(n)$ be the identity function,
defined by $e(1)=1$ and $e(n)=0$ for $n\geq 2$. We recall the
following result of Kaczorowski, Molteni and Perelli \cite{kac}.

\begin{lem}(\cite[Lemma 1]{kac})
Let $f_1(n),\ldots,f_r(n)$ be multiplicative functions such that
$e(n),f_1(n),\ldots,f_r(n)$ are pairwise non-equivalent, and let
$m$ be a non-negative integer. Then the functions
$$f_1^{(0)}(n),\ldots,f_1^{(m)}(n), f_2^{(0)}(n),\ldots,f_2^{(m)}(n),
\ldots, f_r^{(0)}(n),\ldots,f_r^{(m)}(n)$$
are linearly independent over $\mathbb C$.
\end{lem}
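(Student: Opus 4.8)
The plan is to pass to a polynomial reformulation and then carry out a single strong induction on a complexity parameter. Since $f_j^{(k)}(n)=(-1)^k f_j(n)\log^k n$, a relation $\sum_{j=1}^r\sum_{k=0}^m c_{jk}f_j^{(k)}(n)=0$ for all $n\ge 1$ is exactly
$$\sum_{j=1}^r f_j(n)\,P_j(\log n)=0\quad(n\ge1),\qquad P_j(x):=\sum_{k=0}^m c_{jk}(-x)^k\in\C[x],$$
and linear independence of the $f_j^{(k)}$ amounts to saying that every such relation forces $P_1=\dots=P_r=0$ (a polynomial vanishes iff all its coefficients do). I would prove the following slightly stronger statement, which is what survives the induction step: \emph{for every finite set $S$ of primes, every collection of multiplicative functions $f_1,\dots,f_r$ with $e,f_1,\dots,f_r$ pairwise non-equivalent, and all $P_1,\dots,P_r\in\C[x]$, if $\sum_{j=1}^r f_j(n)P_j(\log n)=0$ for every $n$ coprime to all primes in $S$, then $P_1=\dots=P_r=0$}; taking $S=\emptyset$ gives the lemma. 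Suppose this fails and choose a counterexample (ranging over all $S$, $r$, and the $P_j$) for which $D:=\sum_{j=1}^r(1+\deg P_j)$ is minimal, using the convention $\deg 0:=-1$; then $D\ge1$.

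Step 1: rule out the case that some $P_{j_0}$ has positive degree. As $f_{j_0}$ is not equivalent to $e$, there are infinitely many primes $p$ with $f_{j_0}(p^a)\ne0$ for some $a\ge1$; fix one such $p$ outside $S$ together with a corresponding exponent $a$. Restricting to $n$ coprime to $p$, multiplicativity turns the relation at $np^a$ into $\sum_j f_j(n)f_j(p^a)P_j(\log n+a\log p)=0$; subtracting $f_{j_0}(p^a)$ times the relation at $n$ gives, for all $n$ coprime to $S\cup\{p\}$,
$$\sum_{j=1}^r f_j(n)\,\widetilde P_j(\log n)=0,\qquad \widetilde P_j(x):=f_j(p^a)\,P_j(x+a\log p)-f_{j_0}(p^a)\,P_j(x).$$
For $j\ne j_0$ one has $\deg\widetilde P_j\le\deg P_j$, whereas $\widetilde P_{j_0}(x)=f_{j_0}(p^a)\bigl(P_{j_0}(x+a\log p)-P_{j_0}(x)\bigr)$ has degree exactly $\deg P_{j_0}-1\ge0$, since for a polynomial of degree $d\ge1$ and any $c\ne0$ the difference $P(x+c)-P(x)$ has degree $d-1$, and $f_{j_0}(p^a)\ne0$. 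Hence this new relation has parameter $\le D-1$, so by minimality of $D$ all $\widetilde P_j=0$; but $\widetilde P_{j_0}\ne0$, a contradiction. Therefore, in the minimal counterexample every nonzero $P_j$ is a constant.

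Step 2: the constant case $\sum_{j=1}^r c_j f_j(n)=0$ ($n$ coprime to $S$) with the $c_j$ not all $0$ — now $D$ is the number of nonzero $c_j$. If exactly one $c_{j_0}$ is nonzero, then $n=1$ gives $c_{j_0}=c_{j_0}f_{j_0}(1)=0$, impossible. If $c_{j_1},c_{j_2}\ne0$ with $j_1\ne j_2$, then since $f_{j_1}$ and $f_{j_2}$ are non-equivalent there is a prime power $q^b$ with $q\notin S$ and $f_{j_1}(q^b)\ne f_{j_2}(q^b)$; restricting to $n$ coprime to $q$ and subtracting $f_{j_1}(q^b)$ times the relation at $n$ from the relation at $nq^b$ gives
$$\sum_{j=1}^r c_j\bigl(f_j(q^b)-f_{j_1}(q^b)\bigr)f_j(n)=0\qquad(n\text{ coprime to }S\cup\{q\}),$$
whose $j_1$-coefficient is $0$ and whose $j_2$-coefficient is nonzero. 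As it has strictly fewer nonzero coefficients, minimality forces all of them to vanish, contradicting $c_{j_2}(f_{j_2}(q^b)-f_{j_1}(q^b))\ne0$. In every case we reach a contradiction, so no counterexample exists and the lemma follows.

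The one genuinely substantive choice is the complexity parameter $D=\sum_j(1+\deg P_j)$, together with the matching pair of moves — lowering a degree through the difference $P(x+a\log p)-P(x)$, and removing a function through $f_{j_1}(q^b)-f_{j_2}(q^b)$. What makes each move effective is precisely the hypothesis: non-equivalence of $f_{j_0}$ with $e$ produces the nonzero factor $f_{j_0}(p^a)$, and non-equivalence of $f_{j_1}$ with $f_{j_2}$ produces $f_{j_1}(q^b)\ne f_{j_2}(q^b)$, so $D$ really drops instead of the step being vacuous. The only remaining care is that each move enlarges the set of excluded primes by one, which is why $S$ is built into the statement from the outset; I do not foresee any deeper obstacle.
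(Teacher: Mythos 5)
Your proof is correct. Note first that the paper itself does not prove this lemma at all: it is quoted verbatim from Kaczorowski--Molteni--Perelli \cite[Lemma 1]{kac}, so there is no in-paper argument to compare against; what you have supplied is a self-contained replacement for the citation. Your argument holds up under scrutiny. The reformulation as $\sum_j f_j(n)P_j(\log n)=0$ with $P_j(x)=\sum_k c_{jk}(-x)^k$ is exactly right, and the two reduction moves are sound: the identity $f_j(np^a)=f_j(n)f_j(p^a)$ is legitimate because $n$ is restricted to be coprime to $p$, non-equivalence of $f_{j_0}$ with $e$ does yield infinitely many primes $p$ (hence one outside the finite set $S$) with $f_{j_0}(p^a)\neq 0$ for some $a$, and the difference $P(x+a\log p)-P(x)$ has degree exactly $\deg P-1$ since $a\log p\neq 0$, so the parameter $D$ genuinely decreases in Step 1; likewise in Step 2 the coefficient at $j_1$ is killed while the one at $j_2$ survives, so $D$ decreases again. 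Strengthening the statement to allow an arbitrary finite excluded set $S$ of primes is precisely the right device to make the induction close, and the base observations ($D\geq 1$ for a counterexample, the single-coefficient case via $f_{j_0}(1)=1$) are handled correctly. This is an elementary and complete proof of the cited lemma, in the same combinatorial spirit as the Kaczorowski--Molteni--Perelli original (induction on a degree-type parameter using shifts by prime powers), and it could stand in place of the reference.
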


\begin{cor}
Let $\varepsilon>0$. Let $f_1(n),\ldots,f_r(n)$ be multiplicative functions of order $O(n^{\varepsilon})$
such that $e(n),f_1(n),\ldots$, $f_r(n)$ are pairwise non-equivalent. 
Let $$F_j(s)=\sum_{n=1}^{\infty}\frac{f_j(n)}{n^s},\; j=1,\ldots,r,$$
be the associated Dirichlet series. Let $\mathcal A_{\varepsilon}\subset \mathcal B_{\varepsilon}$ be a $\mathbb C$-vector space 
with $$\mathcal A_{\varepsilon}\cap  \mathcal V_{\varepsilon} = \{0\}.$$ Let $m\geq 0$. If the functions 
 $G_{jk}(\sigma)\in \mathcal A_{\varepsilon} $ satisfy
$$ \sum_{j=1}^r \sum_{k=0}^m G_{jk}(\sigma)F^{(k)}_j(\sigma) = 0,\; \sigma>1+\varepsilon, $$
then $G_{jk}=0$, $1\leq j\leq r$, $0\leq k\leq m$.  
\end{cor}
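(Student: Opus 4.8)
The plan is to reduce the statement about the derivatives $F_j^{(k)}$ to Theorem~2 by repackaging the double sum as a single linear combination of Dirichlet series whose coefficient functions form a $\mathbb{C}$-linearly independent family. Recall that $F_j^{(k)}(\sigma) = \sum_{n=1}^\infty f_j^{(k)}(n) n^{-\sigma}$ where $f_j^{(k)}(n) = (-1)^k f_j(n)\log^k n$. So I would first relabel the $rm+r$ functions $\{f_j^{(k)} : 1\le j\le r,\ 0\le k\le m\}$ as $h_1,\dots,h_N$ with $N = r(m+1)$, and correspondingly relabel the coefficient functions $G_{jk}$ as $H_1,\dots,H_N \in \mathcal{A}_\varepsilon$, and the Dirichlet series $F_j^{(k)}$ as $\Phi_1,\dots,\Phi_N$, so that the hypothesis becomes $\sum_{i=1}^N H_i(\sigma)\Phi_i(\sigma) = 0$ for $\sigma > 1+\varepsilon$.

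Next I would verify the two hypotheses of Theorem~2 for this relabelled system. For the order condition: since $f_j(n) = O(n^\varepsilon)$, for any $\varepsilon' > \varepsilon$ we have $f_j^{(k)}(n) = (-1)^k f_j(n)\log^k n = O(n^{\varepsilon'})$, because $\log^k n$ grows slower than any positive power of $n$; so each $h_i$ has order $O(n^{\varepsilon'})$. Thus I would apply Theorem~2 with $\varepsilon'$ in place of $\varepsilon$ — here one must note that the hypothesis $\mathcal{A}_\varepsilon \subset \mathcal{B}_\varepsilon$ and $\mathcal{A}_\varepsilon \cap \mathcal{V}_\varepsilon = \{0\}$ needs to be transported to the index $\varepsilon'$; but the functions $G_{jk}$ are defined on $(1+\varepsilon, +\infty) \supset (1+\varepsilon', +\infty)$, they still lie in $\mathcal{B}_{\varepsilon'}$ upon restriction, and a function in $\mathcal{A}_\varepsilon$ whose restriction to $(1+\varepsilon',+\infty)$ lies in $\mathcal{V}_{\varepsilon'}$ already lies in $\mathcal{V}_\varepsilon$ (the decay condition $\exists a>0:\ e^{a\sigma}|f(\sigma)|\to 0$ is about the behaviour as $\sigma\to+\infty$ and is unaffected by shrinking the domain), hence is $0$. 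So the span of the restricted $G_{jk}$ together with this inherited transversality provides an admissible $\mathcal{A}_{\varepsilon'}$ for Theorem~2. For the linear independence condition: the family $\{f_j^{(k)}\}$ is exactly the family appearing in Lemma~3, and since $e(n), f_1(n),\dots,f_r(n)$ are pairwise non-equivalent by hypothesis, Lemma~3 gives that $h_1,\dots,h_N$ are linearly independent over $\mathbb{C}$.

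With both hypotheses checked, Theorem~2 applies and yields $H_1 = \cdots = H_N = 0$, i.e. $G_{jk} = 0$ for all $1\le j\le r$, $0\le k\le m$, which is the assertion.

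The only real subtlety is the bookkeeping around the index $\varepsilon$ versus $\varepsilon'$ in the application of Theorem~2: one needs $\varepsilon' > \varepsilon$ to absorb the logarithmic factors into the polynomial growth bound, but then the ambient spaces $\mathcal{F}, \mathcal{B}, \mathcal{V}$ are all indexed by $\varepsilon'$ rather than $\varepsilon$. I expect this to be routine once it is observed that $\mathcal{B}$ and $\mathcal{V}$ are defined purely by growth/decay as $\sigma\to+\infty$, so restricting the domain from $(1+\varepsilon,+\infty)$ to $(1+\varepsilon',+\infty)$ preserves membership in $\mathcal{B}$ and cannot destroy the conclusion $\mathcal{A}\cap\mathcal{V} = \{0\}$. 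Everything else is a direct invocation of Lemma~3 and Theorem~2.
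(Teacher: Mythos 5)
Your proof is correct and takes the same route as the paper, whose entire proof of this corollary is the one line ``It follows from Lemma $3$ and Theorem $2$.'' The $\varepsilon$--versus--$\varepsilon'$ bookkeeping you carry out to absorb the $\log^k n$ factors in $f_j^{(k)}(n)=(-1)^k f_j(n)\log^k n$ (together with transporting $\mathcal A_{\varepsilon}\cap\mathcal V_{\varepsilon}=\{0\}$ to level $\varepsilon'$ and back) addresses a real detail that the paper silently suppresses, and your treatment of it is sound.
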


\begin{proof}
If follows from Lemma $3$ and Theorem $2$.
\end{proof}

\begin{prop}
 Let $f:\mathbb C\rightarrow \mathbb C$ be an entire function of order $\rho<1$ which is not identically zero. Let $\varepsilon>0$. Then 
$f|_{(1+\varepsilon,+\infty)}\in  \mathcal B_{\varepsilon}\setminus \mathcal V_{\varepsilon}$.
\end{prop}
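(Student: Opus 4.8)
The proposition asserts that the restriction $g:=f|_{(1+\varepsilon,+\infty)}$, which is trivially an element of $\mathcal{F}_\varepsilon$, lies in $\mathcal{B}_\varepsilon$ but not in $\mathcal{V}_\varepsilon$. The plan is to deduce membership in $\mathcal{B}_\varepsilon$ from the upper growth bound built into the definition of the order, and failure of membership in $\mathcal{V}_\varepsilon$ from a lower bound for $|f|$ along a suitable sequence of points of the positive real axis.

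\emph{First I would prove $g\in\mathcal{B}_\varepsilon$.} Fix $a>0$ and choose $\rho'$ with $\rho<\rho'<1$. From the definition of the order one gets a constant $C>0$ with $|f(z)|\le C\exp(|z|^{\rho'})$ for all $z\in\mathbb{C}$, hence
\[
e^{-a\sigma}|f(\sigma)|\le C\exp\bigl(\sigma^{\rho'}-a\sigma\bigr)\longrightarrow 0\qquad(\sigma\to+\infty),
\]
since $\rho'<1$. As $a>0$ was arbitrary, $g\in\mathcal{B}_\varepsilon$.

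\emph{Next I would prove $g\notin\mathcal{V}_\varepsilon$.} Since $f$ has order $\rho<1$, Hadamard's factorization theorem gives $f(z)=cz^m\prod_n(1-z/a_n)$ with $c\neq0$, $m\ge0$ an integer, $\sum_n|a_n|^{-1}<\infty$ and no exponential factor (the genus being $0$); moreover the exponent of convergence of $(a_n)$ is at most $\rho$. By the minimum--modulus theorem for entire functions of order $<1$ (the $\cos\pi\rho$ theorem) there are a sequence $r_k\to+\infty$ and a number $\tau<1$ with $\min_{|z|=r_k}|f(z)|\ge\exp(-r_k^{\tau})$, so that $|f(r_k)|\ge\exp(-r_k^{\tau})$. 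Then for every $a>0$,
\[
e^{a r_k}|f(r_k)|\ge\exp\bigl(a r_k-r_k^{\tau}\bigr)\longrightarrow+\infty\qquad(k\to\infty),
\]
hence $e^{a\sigma}|f(\sigma)|$ does not tend to $0$; as $a>0$ was arbitrary, $g\notin\mathcal{V}_\varepsilon$, finishing the argument.

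I expect the only real obstacle to be the lower bound in the second step. If one would rather not quote the $\cos\pi\rho$ theorem, one can argue directly from the product expansion: writing $\log|f(\sigma)|=\log|c|+m\log\sigma+\sum_n\log\tfrac{|\sigma-a_n|}{|a_n|}$, the tail over the zeros with $|a_n|>4R$ contributes $O(R^{\rho'})$ uniformly for $\sigma\in[R,2R]$ (using $\sum_{|a_n|>t}|a_n|^{-1}=O(t^{\rho'-1})$ for $\rho'\in(\rho,1)$), while $m\log\sigma$ and $\prod_{|a_n|\le4R}|a_n|^{-1}$ are controlled by the zero--counting bound $n(t)=O(t^{\rho'})$. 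The delicate part is the contribution of zeros close to the evaluation point $\sigma$; this is handled by noting that $\int_R^{2R}\log^+\tfrac{1}{|\sigma-a_n|}\,d\sigma$ is bounded, so that the average over $\sigma\in[R,2R]$ of $\sum_{|a_n|\le4R}\log^+\tfrac{1}{|\sigma-a_n|}$ is $O(R^{\rho'-1})=o(1)$; picking $\sigma=r_R\in[R,2R]$ where this sum lies below its average and letting $R\to+\infty$ produces a sequence with $\log|f(r_R)|\ge-o(r_R)$, which is precisely what the second step requires. In short, the substantive work amounts to a self-contained proof of the weak form of the minimum--modulus theorem, and it is exactly the hypothesis $\rho<1$ that forces all the exponents occurring there to be strictly less than $1$.
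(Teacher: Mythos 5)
Your proof is correct and follows essentially the same route as the paper: the upper bound $|f(z)|\le C\exp(|z|^{\lambda})$ for $\rho<\lambda<1$ gives membership in $\mathcal B_{\varepsilon}$, and a minimum-modulus lower bound $|f(r_k)|\ge e^{-Br_k^{\tau}}$ with $\tau<1$ along a sequence $r_k\to+\infty$ rules out membership in $\mathcal V_{\varepsilon}$ (the paper obtains this from Hadamard's factorization together with the lower bound for canonical products in Stein--Shakarchi, treating the polynomial case separately, which your appeal to the $\cos\pi\rho$ theorem subsumes).
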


\begin{proof}
Let $\rho<\lambda<1$. There exists a constant $C>0$ such that
$$ |f(s)|\leq C e^{|s|^{\lambda}},\; s\in \mathbb C.$$
Let $a>0$ and $\sigma>1+\epsilon$. We have that
$$ e^{-a\sigma}|f(\sigma)|\leq Ce^{-a\sigma+\sigma^{\lambda}}$$ so
$$ \lim_{\sigma\to +\infty} e^{-a\sigma}|f(\sigma)| = 0,$$
hence $f|_{(1+\varepsilon,+\infty)}\in  \mathcal B_{\varepsilon}$. 
If $f$ is polynomial then $$\lim_{\sigma \to +\infty}e^{a\sigma}|f(\sigma)|=+\infty,\; \forall a>0,$$
so $f|_{(1+\varepsilon,+\infty)}\notin  \mathcal V_{\varepsilon}$. If $f$ is not polynomial, then by Hadamard's Theorem, there exists $A\neq 0$ such that
\begin{equation}
 f(s)=As^m \prod_{n=1}^{\infty}(1-\frac{s}{s_n}),s\in \mathbb C, 
\end{equation}
where $m$ is the multiplicity of $s_0=0$ as zero of $f$, and $s_1,s_2,\ldots$ are the non-zero zeros of $f$.
Let $$E(s):=\prod_{n=1}^{\infty}(1-\frac{s}{s_n}).$$
It is known (see \cite[Ch. 5, Corollary 5.4]{stein}) that there exists a stricly increasing sequence $(r_k)_{k\geq 1}$ of 
positive numbers with $\lim_{k\to+\infty}r_k=+\infty$ and a constant $B>0$ such that 
\begin{equation}
|E(r_k)|\geq e^{-Br_k^{\lambda}},\; \forall k\geq 1.
\end{equation}
Let $a>0$. From $(11)$ and $(12)$ it follows that
$$ e^{ar_k}|f(r_k)| = e^{ar_k}|A|r_k^m |E(r_k)|\geq |A|r_k^m e^{ar_k-Br_k^{\lambda}} \rightarrow +\infty,$$
hence $f|_{(1+\varepsilon,+\infty)}\notin  \mathcal V_{\varepsilon}$.
\end{proof}

\begin{cor}
With the assumptions of Corollary $4$, the holomorphic functions $F^{(k)}_j(s)$, $1\leq j\leq r$, $0\leq k\leq m$ are linearly independent over
the field of meromorphic functions of order $<1$.
\end{cor}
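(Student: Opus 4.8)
The plan is to reduce the statement about linear independence over the field of meromorphic functions of order $<1$ to the already-established Corollary 4. Suppose, for contradiction, that there is a nontrivial linear relation
$$\sum_{j=1}^r \sum_{k=0}^m g_{jk}(s)\, F_j^{(k)}(s) = 0$$
holding on an appropriate region, where each $g_{jk}$ is meromorphic of order $<1$, not all zero. The first step is to clear denominators: since a finite product of meromorphic functions of order $<1$ is again meromorphic of order $<1$, multiply through by a common denominator to assume that every $g_{jk}$ is \emph{entire} of order $<1$, still not all zero. Then by Proposition 5, the restriction of each nonzero $g_{jk}$ to $(1+\varepsilon,+\infty)$ lies in $\mathcal B_\varepsilon\setminus\mathcal V_\varepsilon$, and the restriction of the zero function is of course $0$; so in all cases $g_{jk}|_{(1+\varepsilon,+\infty)}\in\mathcal B_\varepsilon$.

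The second step is to produce a $\mathbb C$-vector space $\mathcal A_\varepsilon$ as required by Corollary 4. Take $\mathcal A_\varepsilon$ to be the $\mathbb C$-span inside $\mathcal B_\varepsilon$ of the finitely many restricted functions $g_{jk}|_{(1+\varepsilon,+\infty)}$. We must check $\mathcal A_\varepsilon\cap\mathcal V_\varepsilon=\{0\}$. An element of $\mathcal A_\varepsilon$ is the restriction of a $\mathbb C$-linear combination $h=\sum c_{jk} g_{jk}$, which is again entire of order $<1$; if $h\not\equiv 0$ then by Proposition 5 its restriction is not in $\mathcal V_\varepsilon$, while if $h\equiv 0$ the restriction is $0$. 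Hence $\mathcal A_\varepsilon\cap\mathcal V_\varepsilon=\{0\}$, as needed. Restricting the assumed identity to $\sigma>1+\varepsilon$ gives exactly the hypothesis of Corollary 4 with $G_{jk}:=g_{jk}|_{(1+\varepsilon,+\infty)}\in\mathcal A_\varepsilon$.

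The third step is to invoke Corollary 4 to conclude $g_{jk}|_{(1+\varepsilon,+\infty)}=0$ for all $j,k$. Since each $g_{jk}$ is entire (hence real-analytic on the real line) and vanishes on the interval $(1+\varepsilon,+\infty)$, the identity theorem forces $g_{jk}\equiv 0$ on $\mathbb C$, contradicting the assumption that the original relation was nontrivial. Therefore no nontrivial relation exists, and the $F_j^{(k)}(s)$ are linearly independent over the field of meromorphic functions of order $<1$.

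I do not expect a serious obstacle here; the only points requiring care are the routine facts that the class of meromorphic functions of order $<1$ is closed under the ring operations (so that clearing denominators is legitimate and the span $\mathcal A_\varepsilon$ consists of restrictions of order-$<1$ functions), and that Proposition 5 applies uniformly to every nonzero element of that span. Once those are in hand, the result is immediate from Corollary 4.
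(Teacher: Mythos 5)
Your proposal is correct and follows essentially the same route as the paper: clear denominators to reduce to entire coefficients of order $<1$, restrict to $(1+\varepsilon,+\infty)$, apply Proposition 5 together with Corollary 4, and finish with the identity theorem. The only difference is that you explicitly construct the space $\mathcal A_{\varepsilon}$ as the finite span of the restricted coefficients and verify $\mathcal A_{\varepsilon}\cap\mathcal V_{\varepsilon}=\{0\}$ via Proposition 5 applied to linear combinations, a detail the paper leaves implicit.
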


\begin{proof}
 Suppose that there exists a linear combination
$$\sum_{j=1}^r \sum_{k=0}^m Q_{jk}(s)F^{(k)}_j(s) = 0,\;\Re s>1+\epsilon, $$
where $Q_{jk}$ are meromorphic functions of order $<1$. It is known that a meromorphic function of order $<1$ is
a quotient of entire functions of order $<1$. So we may suppose that $Q_{jk}$ are entire functions of order $<1$. 
Let $\sigma>1+\epsilon$. We have that 
$$\sum_{j=1}^r \sum_{k=0}^m Q_{jk}(\sigma)F^{(k)}_j(\sigma) = 0.$$
From Proposition $5$ and Corollary $4$ it follows that 
$$Q_{jk}(\sigma)=0,\;\sigma>1+\epsilon,$$
hence $$Q_{jk}(s)=0,\;s\in\mathbb C,$$
by the identity principle for holomorphic functions.
\end{proof}

\section{Artin L-functions}

Let $K/\mathbb Q$ be a finite Galois extension.
It was proved in \cite[Theorem 1]{lucrare} that the derivatives of any order of 
Artin $L$-functions to finitely many distinct characters of the Galois group are linearly 
independent over $\mathbb C$. In our main result we extend this:

\begin{theorem}
Let $K/\mathbb Q$ be a finite Galois extension, and let $\chi_1,\ldots,\chi_r$ be distinct characters of the Galois 
group with the associated Artin L-functions $L(s,\chi_1),\ldots, L(s,\chi_r)$.
Let $\varepsilon>0$. Let $\mathcal A_{\varepsilon}\subset \mathcal B_{\varepsilon}$ be a $\mathbb C$-vector space 
with $$\mathcal A_{\varepsilon}\cap  \mathcal V_{\varepsilon} = \{0\}.$$ Let  $m\geq 0$. If the functions 
 $G_{jk}(\sigma)\in \mathcal A_{\varepsilon}$ satisfy 
$$ \sum_{j=1}^r \sum_{k=0}^m G_{jk}(\sigma)L^{(k)}(\sigma,\chi_j) = 0,\; \sigma>1+\varepsilon, $$
then $G_{jk}=0$, $1\leq j\leq r$, $0\leq k\leq m$. 
\end{theorem}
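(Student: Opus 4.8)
The plan is to reduce Theorem 7 to Corollary 4 by expressing each Artin $L$-function as a Dirichlet series attached to a multiplicative arithmetic function and then verifying the non-equivalence hypothesis. First I would recall that for a character $\chi$ of $G = \Gal(K/\Q)$, the Artin $L$-function has, in the half-plane $\Re s > 1$, an absolutely convergent Dirichlet series expansion $L(s,\chi) = \sum_{n\geq 1} a_\chi(n) n^{-s}$, where the coefficient function $a_\chi$ is multiplicative and satisfies $a_\chi(n) = O(n^\varepsilon)$ for every $\varepsilon > 0$ (indeed the coefficients are bounded in terms of divisor-type functions, since $L(s,\chi)$ is dominated by a power of the Dedekind zeta function $\zeta_K$). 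Thus, fixing the given $\varepsilon > 0$, the functions $f_j(n) := a_{\chi_j}(n)$ are multiplicative of order $O(n^\varepsilon)$, and the derivatives $L^{(k)}(s,\chi_j)$ are exactly the Dirichlet series of the arithmetic derivatives $f_j^{(k)}(n) = (-1)^k f_j(n)\log^k n$, as noted in the text preceding Corollary 4.

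The second step, which is the crux, is to check that $e(n), f_1(n), \ldots, f_r(n)$ are pairwise non-equivalent, so that Lemma 3 applies. For two distinct characters $\chi, \chi'$ of $G$, equivalence of $a_\chi$ and $a_{\chi'}$ would mean $a_\chi(p^j) = a_{\chi'}(p^j)$ for all $j\geq 1$ and all but finitely many primes $p$; comparing the Euler factors at such a prime $p$ unramified in $K$, the local factor at $p$ is determined by the characteristic polynomial of $\chi$ (resp. $\chi'$) evaluated on a Frobenius element, so equivalence of the coefficient sequences forces $\chi$ and $\chi'$ to have the same value on every Frobenius conjugacy class, and by the Chebotarev density theorem the Frobenius classes exhaust $G$, whence $\chi = \chi'$ --- a contradiction. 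Similarly, $a_\chi$ is equivalent to $e$ only if almost all Euler factors of $L(s,\chi)$ are trivial, which forces $\chi$ to be the zero character (not among our characters, or if it were, its $L$-function is $1$ and the argument still separates it). This is precisely the kind of separation argument underlying \cite[Theorem 1]{lucrare}, and I expect the main obstacle to be stating it cleanly when some $\chi_j$ may be reducible or when ramified primes must be discarded; the finitely-many-exceptions clause in the definition of equivalence is exactly what absorbs the ramified primes.

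With non-equivalence established, the conclusion is immediate: the hypotheses of Corollary 4 hold for $f_1, \ldots, f_r$ with the same space $\CalA_\varepsilon$, and the relation
$$ \sum_{j=1}^r \sum_{k=0}^m G_{jk}(\sigma) L^{(k)}(\sigma,\chi_j) = \sum_{j=1}^r \sum_{k=0}^m G_{jk}(\sigma) F_j^{(k)}(\sigma) = 0, \quad \sigma > 1+\varepsilon, $$
forces $G_{jk} = 0$ for all $j, k$ by Corollary 4. One small point to address is that $\varepsilon$ in Theorem 7 is arbitrary and fixed in advance, while the Dirichlet series converges for $\Re s > 1$; since $a_{\chi_j}(n) = O(n^\varepsilon)$ holds for this particular $\varepsilon$ (in fact for all positive exponents), the order hypothesis of Corollary 4 is met on the nose, so no shrinking of $\varepsilon$ is needed. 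This completes the reduction.
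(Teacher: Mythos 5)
Your proposal is correct and follows essentially the same route as the paper: expand each $L(s,\chi_j)$ as a Dirichlet series with multiplicative coefficients of order $O(n^{\varepsilon})$, use Chebotarev's density theorem to show the coefficient functions together with $e(n)$ are pairwise non-equivalent (the finitely-many-exceptions clause absorbing the ramified primes, exactly as you note), and conclude by Corollary 4. The only cosmetic difference is that the paper checks non-equivalence directly via $f_j(p)=\chi_j(\mathrm{Frob}_p)$ at unramified primes rather than arguing through the Euler factors, but this is the same argument.
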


\begin{proof}
Let $$L(s,\chi_j)=\sum_{n=1}^{\infty} \frac{f_j(n)}{n^s},\; j=1,\ldots,r$$
be the Dirichlet series expansion of $L(s,\chi_j)$ in the half-plane $\Re s>1$.
Since in $\Re s>1$ the function $L(s,\chi_j)$ is defined by an Euler product, the arithmetic function $f_j(n)$
is multiplicative. It is well known that for Artin L-functions, the function $f_j(n)$ is $O(n^{\delta})$ for any $\delta>0$,
hence $O(n^{\varepsilon})$.
We show that $e(n),f_1(n),\ldots,f_r(n)$ are pairwise non-equivalent.
For a prime number $p$ which is not ramified in $K$, the value $f_j(p)$ equals the value of the character 
$\chi_j$ on the Frobenius class associated to $p$. For distinct characters $\chi_j$ and $\chi_k$ there are,
by Chebotarev's density theorem, infinitely many primes $p$ such that $f_j(p)\neq f_k(p)$,
so the arithmetic functions $f_j$ and $f_k$ are not equivalent in the sense of section $1$.
Also, $e(p)=0$ for any prime $p$, while there exist infinitely many non-ramified primes $p$ with
$f_j(p)= \chi_j(1) \neq 0$, hence the arithmetic functions $e$ and $f_j$ are not equivalent.
We apply Corollary $4$.
\end{proof}

\begin{cor}
Let $K/\mathbb Q$ be a finite Galois extension. Let $\chi_1,\ldots,\chi_r$ be $r\geq 1$ distinct characters of the Galois 
group with the associated Artin L-functions $L(s,\chi_1),\ldots, L(s,\chi_r)$.
Let $m\geq 0$. The meromorphic functions $L^{(k)}(s,\chi_j)$, $1\leq j\leq r$, $0\leq k\leq m$ are linearly independent over
the field of meromorphic functions of order $<1$.
\end{cor}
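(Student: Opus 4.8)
The plan is to reduce the statement to Theorem~$7$ exactly as Corollary~$6$ is reduced to Corollary~$4$. Suppose, for contradiction, that there is a nontrivial relation
$$\sum_{j=1}^r \sum_{k=0}^m Q_{jk}(s)L^{(k)}(s,\chi_j) = 0,\; \Re s>1+\varepsilon,$$
where the $Q_{jk}$ are meromorphic functions of order $<1$, not all identically zero, and $\varepsilon>0$ is fixed. Since a meromorphic function of order $<1$ is a quotient of two entire functions of order $<1$, after multiplying the relation by a common denominator --- a product of finitely many entire functions of order $<1$, hence again entire of order $<1$ --- we may assume that all the $Q_{jk}$ are entire of order $<1$, still not all identically zero.

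Next I would let $\mathcal A_{\varepsilon}$ be the set of restrictions to $(1+\varepsilon,+\infty)$ of entire functions of order $<1$. This is a $\mathbb C$-vector space, because a $\mathbb C$-linear combination of entire functions of order $<1$ is again entire of order $<1$. By Proposition~$5$, every element of $\mathcal A_{\varepsilon}$ lies in $\mathcal B_{\varepsilon}$, and every nonzero element of $\mathcal A_{\varepsilon}$ lies outside $\mathcal V_{\varepsilon}$; thus $\mathcal A_{\varepsilon}\subset\mathcal B_{\varepsilon}$ and $\mathcal A_{\varepsilon}\cap\mathcal V_{\varepsilon}=\{0\}$. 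Restricting the relation above to $\sigma\in(1+\varepsilon,+\infty)$ and applying Theorem~$7$ with $G_{jk}:=Q_{jk}|_{(1+\varepsilon,+\infty)}\in\mathcal A_{\varepsilon}$ gives $Q_{jk}(\sigma)=0$ for all $\sigma>1+\varepsilon$ and all $j,k$.

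Finally, each $Q_{jk}$ is entire and vanishes on the interval $(1+\varepsilon,+\infty)$, so $Q_{jk}\equiv 0$ by the identity principle for holomorphic functions, contradicting the assumption that not all $Q_{jk}$ vanish. This establishes linear independence of the $L^{(k)}(s,\chi_j)$ over the field of meromorphic functions of order $<1$.

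I do not expect a genuine obstacle here, since all the analytic substance is already packaged into Theorem~$7$ and Proposition~$5$. The only points that need a line of justification are the closure of the class of entire functions of order $<1$ under finite sums and products --- needed both to clear denominators inside the class and to see that $\mathcal A_{\varepsilon}$ is a vector space --- and the passage from vanishing on the real ray to vanishing identically, which is immediate from the identity principle.
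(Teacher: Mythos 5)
Your proof is correct and follows essentially the same route as the paper: the paper proves this corollary by combining Theorem~7 with Corollary~6, whose proof is exactly the argument you give (clear denominators to reduce to entire coefficients of order $<1$, take $\mathcal A_{\varepsilon}$ to be their restrictions to $(1+\varepsilon,+\infty)$, invoke Proposition~5 to verify $\mathcal A_{\varepsilon}\subset\mathcal B_{\varepsilon}$ and $\mathcal A_{\varepsilon}\cap\mathcal V_{\varepsilon}=\{0\}$, apply the linear-independence theorem on the real ray, and finish with the identity principle). You have merely inlined that argument with Theorem~7 as the input, so there is nothing to correct.
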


\begin{proof}
 Apply Theorem $7$ and Corollary $6$.
\end{proof}

\noindent Let $\chi_1,\ldots,\chi_h$ be the irreducible characters of the Galois 
group. In \cite[Corollary 4]{lucrare} it was proved that
the Artin L-functions $L(s,\chi_1), \ldots, L(s,\chi_h)$ are algebraically independent over $\mathbb C$.
This extended Artin's result \cite[Satz 5, P. 106]{artin1} that $L(s,\chi_1), \ldots$ , $L(s,\chi_h)$ are multiplicatively independent. 
Now we can prove more:

\begin{cor}
Let $K/\mathbb Q$ be a finite Galois extension, and let $\chi_1,\ldots,\chi_h$ be the irreducible characters of the Galois 
group. Then the Artin L-functions $L(s,\chi_1), \ldots, L(s,\chi_h)$ are algebraically independent over 
the field of meromorphic functions of order $<1$.
\end{cor}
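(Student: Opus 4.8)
The plan is to reduce the statement to the linear independence of Corollary $8$, using the elementary fact that a product of Artin L-functions attached to the irreducible characters is itself an Artin L-function. Suppose, for contradiction, that $L(s,\chi_1),\ldots,L(s,\chi_h)$ are algebraically dependent over the field of meromorphic functions of order $<1$. Then there is a nonzero polynomial
$$P(X_1,\ldots,X_h)=\sum_{\mathbf a\in S}c_{\mathbf a}(s)\,X_1^{a_1}\cdots X_h^{a_h},$$
with $S\subset\Z_{\geq 0}^h$ a finite set and each $c_{\mathbf a}$ a nonzero meromorphic function of order $<1$, such that
$$\sum_{\mathbf a\in S}c_{\mathbf a}(s)\,L(s,\chi_1)^{a_1}\cdots L(s,\chi_h)^{a_h}=0$$
as an identity of meromorphic functions on $\C$; indeed it holds for $\Re s>1$, where the defining Dirichlet series converge, hence on all of $\C$ by analytic continuation.

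The only point that needs care is the constant term $\mathbf a=\mathbf 0$, which contributes $c_{\mathbf 0}(s)\cdot 1$, the L-function of the zero representation; this is not covered by Corollary $8$, since its Dirichlet coefficients are $e(n)$ and the proof of Theorem $7$ uses that every $f_j$ is non-equivalent to $e$. I would sidestep this by multiplying the identity through by the nonzero meromorphic function $L(s,\chi_1)\cdots L(s,\chi_h)$ (a harmless operation, as multiplying a meromorphic identity $=0$ by a nonzero meromorphic function preserves it), obtaining
$$\sum_{\mathbf a\in S}c_{\mathbf a}(s)\,L(s,\chi_1)^{a_1+1}\cdots L(s,\chi_h)^{a_h+1}=0.$$
By additivity of Artin L-functions ($L(s,\psi+\psi')=L(s,\psi)L(s,\psi')$, immediate from the Euler product), the $\mathbf a$-th term equals $c_{\mathbf a}(s)\,L(s,\psi_{\mathbf a})$, where $\psi_{\mathbf a}:=\sum_{j=1}^{h}(a_j+1)\chi_j$ is the character of a genuine representation of $\Gal(K/\Q)$. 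Since every coefficient $a_j+1$ is $\geq 1$, we have $\psi_{\mathbf a}(1)\geq h\geq 1$, so $\psi_{\mathbf a}\neq 0$; and since $\chi_1,\ldots,\chi_h$ are linearly independent over $\C$ by the orthogonality relations, hence over $\Z$, the assignment $\mathbf a\mapsto\psi_{\mathbf a}$ is injective on $S$. Thus $\{\psi_{\mathbf a}:\mathbf a\in S\}$ is a finite family of pairwise distinct characters.

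Consequently the last display is a linear relation
$$\sum_{\mathbf a\in S}c_{\mathbf a}(s)\,L(s,\psi_{\mathbf a})=0$$
among Artin L-functions attached to pairwise distinct characters, with coefficients in the field of meromorphic functions of order $<1$. Applying Corollary $8$ with $m=0$ to the characters $\psi_{\mathbf a}$ forces $c_{\mathbf a}=0$ for every $\mathbf a\in S$, contradicting $c_{\mathbf a}\neq 0$. Hence no such $P$ exists, and $L(s,\chi_1),\ldots,L(s,\chi_h)$ are algebraically independent over the field of meromorphic functions of order $<1$. I do not expect a genuine obstacle here: the substantive content is already in Corollary $8$, and what remains is the bookkeeping that distinct monomials in the irreducible L-functions correspond to distinct characters, together with the multiplication by $\prod_{j=1}^{h}L(s,\chi_j)$ needed to eliminate the zero character.
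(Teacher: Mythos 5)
Your proof is correct and takes essentially the same route as the paper, whose entire argument is to combine Corollary $8$ with the identity $L(s,\chi_1)^{n_1}\cdots L(s,\chi_h)^{n_h}=L(s,n_1\chi_1+\cdots+n_h\chi_h)$. Your additional step of multiplying through by $L(s,\chi_1)\cdots L(s,\chi_h)$ to eliminate the zero character arising from the constant term is a genuine detail that the paper leaves implicit, and your injectivity observation via the linear independence of the irreducible characters is exactly the bookkeeping the paper's one-line proof presupposes.
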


\begin{proof}

This follows from Corollary $8$ and the fundamental property 
$$ L(s,\chi_1)^{n_1}\cdots L(s,\chi_h)^{n_h} = L(s,n_1\chi_1 + \cdots + n_h\chi_h). $$

\end{proof}

\begin{cor}
Let $K_1/\mathbb Q,\ldots,K_r/\mathbb Q$ be $r\geq 1$ distinct finite Galois extensions
with the Dedekind zeta-functions $\zeta_{K_1},\ldots,\zeta_{K_r}$. Let $m\geq 0$. The functions 
$$\zeta^{(0)}_{K_1},\ldots,\zeta^{(m)}_{K_1},\zeta^{(0)}_{K_2},\ldots,\zeta^{(m)}_{K_2},\ldots,\zeta^{(0)}_{K_r}, \ldots, \zeta^{(m)}_{K_r}$$
are linearly independent over the field of meromorphic functions of order $<1$.
\end{cor}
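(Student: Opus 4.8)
The plan is to express each Dedekind zeta-function as an Artin $L$-function attached to a single Galois group and then quote Corollary $8$. First I would form the compositum $K:=K_1\cdots K_r$ inside a fixed algebraic closure of $\mathbb Q$; since each $K_i/\mathbb Q$ is Galois, so is $K/\mathbb Q$. Set $G:=\Gal(K/\mathbb Q)$ and $H_i:=\Gal(K/K_i)$, a normal subgroup of $G$, and let $\chi_i:=\Ind_{H_i}^G \mathbf 1_{H_i}$ be the permutation character of $G$ on $G/H_i$, which is the character of a genuine complex representation of $G$. By the invariance of Artin $L$-functions under induction (\cite{artin}) one has
$$\zeta_{K_i}(s)=L(s,\mathbf 1,K/K_i)=L(s,\chi_i,K/\mathbb Q),\qquad 1\leq i\leq r,$$
and therefore $\zeta_{K_i}^{(k)}(s)=L^{(k)}(s,\chi_i)$ for every $k\geq 0$.

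The crucial point to verify is that $\chi_1,\ldots,\chi_r$ are pairwise distinct characters of $G$. Because $H_i$ is normal, $\mathbb C[G/H_i]$ is the regular representation of the quotient $G/H_i$ inflated to $G$, so the irreducible constituents of $\chi_i$ are precisely the irreducible characters of $G$ whose kernel contains $H_i$, and intersecting the kernels of these characters recovers $H_i$. Hence $\chi_i=\chi_j$ would force $H_i=H_j$ and thus $K_i=K_j$, contradicting the assumption that the extensions are distinct. This is exactly where the hypothesis that the $K_i$ are Galois over $\mathbb Q$ is used: without it, distinct arithmetically equivalent --- that is, Gassmann equivalent --- fields would yield the same permutation character, and in fact the same zeta-function, so the statement would fail.

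With the $\chi_i$ known to be distinct, I would apply Corollary $8$ to the finite Galois extension $K/\mathbb Q$ and the characters $\chi_1,\ldots,\chi_r$: the functions $L^{(k)}(s,\chi_i)$, $1\leq i\leq r$, $0\leq k\leq m$, are linearly independent over the field of meromorphic functions of order $<1$. Translating back through the identifications $\zeta_{K_i}^{(k)}=L^{(k)}(\cdot,\chi_i)$ yields precisely the asserted linear independence of the $\zeta_{K_i}^{(k)}$. The only step that needs a genuine (if short) argument is the distinctness of the permutation characters $\chi_i$; everything else is a direct appeal to Artin's formalism together with Corollary $8$.
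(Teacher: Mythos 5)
Your proof is correct and follows essentially the same route as the paper: write $\zeta_{K_i}$ as $L(s,\chi_i,K/\mathbb Q)$ for the compositum $K=K_1\cdots K_r$ with distinct characters $\chi_i$, then invoke Corollary $8$. The paper delegates the construction and the distinctness of the $\chi_i$ to a citation of an earlier work, whereas you supply the (correct) argument explicitly via normality of $H_i=\Gal(K/K_i)$ and the fact that the permutation character determines $H_i$.
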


\begin{proof}
As in the proof of \cite[Corollary 5]{lucrare}, it holds that
$$\zeta_{K_j}(s)=L(s,\chi_j,K/\mathbb Q),\;j=1,\ldots,r, $$
where $K:=K_1\cdots K_r$ is the compositum of the fields $K_1,\ldots,K_r$ and $\chi_1,\ldots,\chi_r$
are distinct characters of the Galois group of $K$. We apply Corollary $8$.
\end{proof}

{}

\vspace{2mm} \noindent {\footnotesize
\begin{minipage}[b]{15cm}
Mircea Cimpoea\c s, Simion Stoilow Institute of Mathematics, Research unit 5, P.O.Box 1-764,\\
Bucharest 014700, Romania, E-mail: mircea.cimpoeas@imar.ro
\end{minipage}}

\vspace{2mm} \noindent {\footnotesize
\begin{minipage}[b]{15cm}
Florin Nicolae, Simion Stoilow Institute of Mathematics, P.O.Box 1-764,\\
Bucharest 014700, Romania, E-mail: florin.nicolae@imar.ro
\end{minipage}}
\end{document}